\documentclass[11pt,a4paper]{article}

\usepackage[T1]{fontenc}
\usepackage[utf8]{inputenc}
\usepackage[margin=2.5cm]{geometry}
\usepackage{mathtools}
\usepackage{amsthm}
\usepackage{newtxtext}
\usepackage[subscriptcorrection]{newtxmath}
\usepackage{microtype}
\usepackage{graphicx}
\usepackage{hyperref}
\usepackage[
  backend=biber,
  style=alphabetic,
  natbib=true,
  maxbibnames=40,
  doi=false,
  isbn=false,
  url=false,
  eprint=false
]{biblatex}
\theoremstyle{plain}
\newtheorem{theorem}{Theorem}
\newtheorem{lemma}{Lemma}
\newtheorem{proposition}{Proposition}
\newtheorem{corollary}{Corollary}
\theoremstyle{definition}
\newtheorem{assumption}{Assumption}
\newcommand{\R}{\mathbb{R}}
\newcommand{\E}{\mathbb{E}}
\newcommand{\Law}{\mathcal{L}}
\newcommand{\norm}[1]{\lVert#1\rVert}
\usepackage{authblk}

\title{Theoretical guarantees for stochastic gradient Langevin dynamics}
\author[1]{Daniel Paulin}
\author[2]{Peter A. Whalley}

\affil[1]{College of Computing and Data Science\\
Nanyang Technological University, Singapore}

\affil[2]{Department of Statistics\\
University of Warwick, United Kingdom}

\begin{document}
\maketitle
\begin{abstract}
We prove asymptotic bias bounds for stochastic gradient Langevin dynamics in Wasserstein distance of order two. We assume that the negative log-density is strongly convex with a Lipschitz gradient, and that the stochastic gradient estimator is unbiased with an error satisfying a mean-square Lipschitz condition. The bounds are of order $h$ under a fourth moment assumption on the stochastic gradient error and of order $h^{1/2}$ under only a second moment assumption, where $h$ is the stepsize. A spiked-noise example shows that a second moment assumption alone is insufficient for a bound of order $h$ that is uniform over noise distributions with a fixed variance.
\end{abstract}

\section{Introduction}

We consider sampling from a probability measure $\pi(dx)\propto\exp\left(-V(x)\right)dx$ on $\R^d$, an important computational task in modern statistics underpinning Bayesian inference and uncertainty quantification (see \cite{Roberts2004,liu2001monte}). A classical approach uses the overdamped Langevin dynamics, constructed to have invariant measure $\pi$:
\[
 X_0\sim\mu,\qquad dX_t=-\nabla V(X_t)\,dt+\sqrt{2}\,dB_t,
\]
where $(B_t)_{t\geq0}$ is a standard $d$-dimensional Brownian motion.

These dynamics generally cannot be simulated exactly, so one resorts to numerical discretisation. The simplest choice is the Euler discretisation, which gives the unadjusted Langevin algorithm (see, e.g., \cite{durmus2019high}). Starting from $Y_0=X_0\sim\mu$, its iterates are defined by
\begin{equation}\label{eq:ULA}
 Y_{k+1}=Y_k-h\nabla V(Y_k)+\sqrt{2h}Z_{k+1},
\end{equation}
where $h>0$ is a fixed stepsize and $\sqrt{h}Z_k\coloneq B_{kh}-B_{(k-1)h}$ for $k\geq1$. Under appropriate assumptions and for sufficiently small $h$, this chain has a unique invariant measure $\pi_h$, which generally differs from $\pi$ due to discretisation error.

Evaluating the full gradient at each iteration can be expensive, particularly when the log-density involves a sum over a large dataset. Replacing it by an unbiased stochastic gradient approximation gives stochastic gradient Langevin dynamics (see \cite{WellingTeh2011} and \cite{ScalableLearningBook} for a complete overview). Specifically, we replace $\nabla V$ in \eqref{eq:ULA} by a jointly measurable stochastic gradient estimator $g(\cdot,\omega)$, where $\omega\sim\Gamma$ (a probability measure defined on a Polish space), satisfying
\[
    \E_\omega g(x,\omega)=\nabla V(x), \qquad x\in\R^d.
\]
We write $\eta(x,\omega)=g(x,\omega)-\nabla V(x)$ for the stochastic gradient error, so that $\E_\omega\eta(x,\omega)=0$ for every $x\in\R^d$. Starting from $X_0\sim\mu$, the iterates of stochastic gradient Langevin dynamics satisfy
\begin{equation}\label{eq:SGLD}
    X_{k+1}=X_k-h\nabla V(X_k)-h\eta(X_k,\omega_{k+1})+\sqrt{2h}\widetilde Z_{k+1},\qquad k\geq0.
\end{equation}
Here the pairs $(\omega_k,\widetilde Z_k)_{k\geq1}$ are independent and identically distributed with law $\Gamma\otimes\mathcal{N}(0,I_d)$, and are independent of $X_0$.

Under appropriate assumptions and for sufficiently small $h$, \eqref{eq:SGLD} has a unique invariant law, which we denote by $\widetilde\pi_h$. Unbiasedness of the stochastic gradient does not imply that $\widetilde\pi_h$ coincides with either $\pi_h$ or $\pi$ and our aim is to quantify the asymptotic bias $W_2(\widetilde\pi_h,\pi)$, where $W_2$ denotes the Wasserstein distance of order two.

Previous work on stochastic gradient Langevin dynamics has characterised the asymptotic bias through weak-error expansions \citep{VollmerZygalakisTeh2016} and provided non-asymptotic bounds in Wasserstein distance \citep{DalalyanKaragulyan2019}. For Wasserstein bounds, a standard approach is to compare the stochastic gradient scheme with the unadjusted Langevin algorithm using synchronous coupling, which gives a bound of order $h^{1/2}$ for the stochastic gradient contribution to the bias. Bounds of order $h$ in Wasserstein distance have been obtained under other assumptions. For example, \cite{LiWang2025} prove a uniform-in-time relative entropy bound of order $h^2$, which also gives a Wasserstein bound of order $h$. Their assumptions include Lipschitz conditions on each stochastic drift and its Jacobian, bounded gradient errors and a warm start. Recently, \cite{PaulinWhalley2026} obtain Wasserstein bias bounds of order $h$ for a stochastic gradient kinetic Langevin method using Gaussian convolution inequalities.

In this article, we prove a bias bound of order $h$ for \eqref{eq:SGLD} under a fourth moment assumption on the stochastic gradient error, together with a bound of order $h^{1/2}$ that can be preferable when this moment is large or infinite. Our analysis combines the Gaussian convolution techniques of \cite{PaulinWhalley2026} with the recent bias estimates for the unadjusted Langevin algorithm of \cite{PedrottiWhalley2026}. The article is organised as follows. We first state our assumptions and establish convergence of stochastic gradient Langevin dynamics to its invariant measure. We then present the asymptotic bias estimates and the resulting convergence guarantees. Finally, we use a spiked-noise example to show that a second moment assumption alone is insufficient for a bias bound of order $h$ that is uniform over noise distributions with a fixed variance.

\section{Assumptions and convergence}

For probability measures $\mu,\nu\in\mathcal P_2(\R^d)$, where $\mathcal P_2(\R^d)$ denotes the measures with finite second moments, we use the Wasserstein distance of order two
\[
 W_2(\mu,\nu)=\inf_{(U,U')\sim(\mu,\nu)} \left(\E\norm{U-U'}^2\right)^{1/2}.
\]
The infimum runs over all couplings with the prescribed marginal laws, and $\norm{\cdot}$ is the Euclidean norm.

\begin{assumption}\label{ass:potential}
The negative log-density $V\in C^2(\R^d)$ satisfies
\[
 \alpha I_d\preceq\nabla^2 V\preceq\beta I_d,\qquad 0<\alpha\leq\beta.
\]
We write $\kappa=\beta/\alpha$ for the condition number.
\end{assumption}

\begin{assumption}\label{ass:sg}
The stochastic gradient error $(x,\omega)\mapsto\eta(x,\omega)$ is jointly measurable, with $x\in\R^d$ and $\omega\sim\Gamma$ taking values in a Polish space. For all $x,y\in\R^d$,
\begin{equation}\label{eq:ass_sg}
 \E_\omega\eta(x,\omega)=0,\qquad \E_\omega\norm{\eta(x,\omega)-\eta(y,\omega)}^2\leq C_G\norm{x-y}^2,
\end{equation}
for some $C_G\geq0$, and
\begin{equation}\label{eq:moment}
\sigma_2^2=\int\E_\omega\norm{\eta(x,\omega)}^2 \pi(dx)<\infty.
\end{equation}
We also write
\[
\sigma_4^4=\int\E_\omega\norm{\eta(x,\omega)}^4 \pi(dx) \in[0,\infty].
\]
\end{assumption}

The condition in \eqref{eq:ass_sg} is sufficient for our proof and allows the variance of the stochastic gradient to depend on the position. This condition is required for us to show that \eqref{eq:SGLD} converges towards its invariant measure, $\widetilde\pi_h$. 

We assume that the stepsize satisfies
\begin{equation}\label{eq:stepsize}
 0<h\leq\min\left\{\frac1\beta,\frac{\alpha}{\alpha^2+2C_G}\right\}.
\end{equation}
Under Assumption~\ref{ass:potential} the unadjusted Langevin algorithm has a unique invariant measure $\pi_h$ in $\mathcal P_2(\R^d)$ and \cite[Theorem~1]{PedrottiWhalley2026} show that
\begin{equation}\label{eq:ULA_bias}
 \sqrt{\alpha}W_2(\pi_h,\pi)\leq 6h\beta\sqrt{d}.
\end{equation}
We first prove convergence of stochastic gradient Langevin dynamics to its invariant measure using synchronous coupling, as in \cite{LeimkuhlerPaulinWhalley2024}.

\begin{proposition}\label{prop:contraction}
Suppose Assumptions~\ref{ass:potential} and~\ref{ass:sg} and \eqref{eq:stepsize} hold. The chain \eqref{eq:SGLD} has a unique invariant law $\widetilde\pi_h$ in $\mathcal P_2(\R^d)$. If $X_0\sim\mu\in\mathcal P_2(\R^d)$, then for every integer $k\geq1$,
\begin{align}
W_2\left(\Law(X_k),\widetilde\pi_h\right)
 &\leq\{(1-\alpha h)^2+h^2C_G\}^{k/2} W_2(\mu,\widetilde\pi_h)\leq\exp(-\alpha hk/2)W_2(\mu,\widetilde\pi_h),
 \label{eq:biased_convergence}
\end{align}
where $\Law(X_k)$ denotes the law of $X_k$.
\end{proposition}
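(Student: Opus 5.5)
The proof uses a synchronous coupling. Let $X_k$ and $X_k'$ be two copies of \eqref{eq:SGLD}, started from $X_0\sim\mu$ and $X_0'\sim\nu$ with $(X_0,X_0')$ an optimal $W_2$ coupling. Both chains are driven by the same pairs $(\omega_{k+1},\widetilde Z_{k+1})$. The Gaussian increments then cancel, and with $\Delta_k=X_k-X_k'$ we get
\[
\Delta_{k+1}=\Delta_k-h\left(\nabla V(X_k)-\nabla V(X_k')\right)-h\left(\eta(X_k,\omega_{k+1})-\eta(X_k',\omega_{k+1})\right).
\]

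The first step is a one-step estimate. Condition on $\mathcal F_k=\sigma(X_0,X_0',(\omega_j,\widetilde Z_j)_{j\le k})$. Since $\omega_{k+1}$ is independent of $\mathcal F_k$ and $\E_\omega\eta(x,\omega)=0$ for every $x$, the cross term between the deterministic part and the noise difference has zero conditional mean. Here joint measurability of $\eta$ allows Fubini to be applied. This gives
\[
\E\left[\norm{\Delta_{k+1}}^2\mid\mathcal F_k\right]=\norm{\Delta_k-h(\nabla V(X_k)-\nabla V(X_k'))}^2+h^2\,\E_\omega\norm{\eta(X_k,\omega)-\eta(X_k',\omega)}^2 .
\]
For the first term, write $\nabla V(X_k)-\nabla V(X_k')=H\Delta_k$ with $H=\int_0^1\nabla^2V(X_k'+t\Delta_k)\,dt$. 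Assumption~\ref{ass:potential} gives $\alpha I_d\preceq H\preceq\beta I_d$. Because $h\le 1/\beta$, the eigenvalues of $I-hH$ lie in $[0,1-\alpha h]$, so the first term is at most $(1-\alpha h)^2\norm{\Delta_k}^2$. The second term is at most $h^2C_G\norm{\Delta_k}^2$ by \eqref{eq:ass_sg}. Taking expectations and iterating gives
\[
\E\norm{\Delta_k}^2\le\{(1-\alpha h)^2+h^2C_G\}^k W_2(\mu,\nu)^2 .
\]
Since $\E\norm{\Delta_k}^2\ge W_2(\Law(X_k),\Law(X_k'))^2$, the same factor to the power $k/2$ bounds the $W_2$ distance between the laws.

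The second step checks that the rate $\rho=(1-\alpha h)^2+h^2C_G$ satisfies $\rho\le 1-\alpha h\le e^{-\alpha h}$. Expanding, $\rho=1-2\alpha h+h^2(\alpha^2+C_G)$. The condition $h\le\alpha/(\alpha^2+2C_G)$ gives $h(\alpha^2+C_G)\le h(\alpha^2+2C_G)\le\alpha$, so $h^2(\alpha^2+C_G)\le\alpha h$ and $\rho\le 1-\alpha h$. Taking the power $k/2$ yields the second inequality in \eqref{eq:biased_convergence}. The $2C_G$ in \eqref{eq:stepsize} leaves slack here. The first inequality is exactly the contraction bound above with $\nu=\widetilde\pi_h$ once stationarity is known.

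The third step proves existence and uniqueness of $\widetilde\pi_h$. This is where I expect the main, though mild, care to be needed. First, the transition kernel $P$ maps $\mathcal P_2$ into itself. Indeed, $\E\norm{\eta(x,\omega)}^2<\infty$ for every $x$: fix $x_0$ with $\E_\omega\norm{\eta(x_0,\omega)}^2<\infty$, which exists $\pi$-a.e.\ by \eqref{eq:moment}, and combine with the Lipschitz condition in \eqref{eq:ass_sg} to get
\[
\E_\omega\norm{\eta(x,\omega)}^2\le 2\,\E_\omega\norm{\eta(x_0,\omega)}^2+2C_G\norm{x-x_0}^2 .
\]
Hence finite second moments propagate. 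The one-step coupling bound shows that $P$ is a strict contraction on the complete metric space $(\mathcal P_2(\R^d),W_2)$, with constant $\rho^{1/2}<1$. The Banach fixed point theorem then gives a unique fixed point $\widetilde\pi_h\in\mathcal P_2$, which is the unique invariant law there. Finally, apply the coupling estimate with $\nu=\widetilde\pi_h$, so that $\Law(X_k')=\widetilde\pi_h$ for all $k$, to conclude \eqref{eq:biased_convergence}.
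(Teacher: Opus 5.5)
Your proposal is correct and follows essentially the same route as the paper. It uses a synchronous coupling with shared $(\omega_{k+1},\widetilde Z_{k+1})$, unbiasedness to remove the cross term, the averaged-Hessian bound $\norm{I-hH}\leq 1-\alpha h$, the mean-square Lipschitz condition, the check $(1-\alpha h)^2+h^2C_G\leq 1-\alpha h$ from \eqref{eq:stepsize}, and Banach's fixed-point theorem on $(\mathcal P_2(\R^d),W_2)$. Your bound on $\E_\omega\norm{\eta(x,\omega)}^2$ for every $x$, via a point $x_0$ where the $\pi$-integrand is finite, spells out the moment-propagation step that the paper only asserts.
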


\begin{proof}
The eigenvalues of the average Hessian along the segment from $x$ to $y$ lie in $[\alpha,\beta]$, so
\begin{equation}\label{eq:drift_contraction}
 \norm{x-y-h\left(\nabla V(x)-\nabla V(y)\right)}
 \leq(1-\alpha h)\norm{x-y}.
\end{equation}
We couple two steps of \eqref{eq:SGLD}, starting from $x$ and $y$, using the same $\omega$ and Gaussian increment. Since the stochastic gradient is unbiased, the cross term has expectation zero and we have
\begin{align*}
 &\E_\omega\norm{x-y-h\left(\nabla V(x)-\nabla V(y)\right) -h\left(\eta(x,\omega)-\eta(y,\omega)\right)}^2\\
 &\hspace{6mm}=\norm{x-y-h\left(\nabla V(x)-\nabla V(y)\right)}^2
                  +h^2\E_\omega\norm{\eta(x,\omega)-\eta(y,\omega)}^2\\
 &\hspace{12mm}\leq\{(1-\alpha h)^2+h^2C_G\}\norm{x-y}^2.
\end{align*}
The chain has finite second moments whenever its initial distribution belongs to $\mathcal P_2(\R^d)$, due to \eqref{eq:ass_sg} and \eqref{eq:moment}. Further, the condition \eqref{eq:stepsize} ensures that
$
 (1-\alpha h)^2+h^2C_G\leq 1-\alpha h<1
$
and we have contraction in the complete metric space $(\mathcal P_2(\R^d),W_2)$ \citep[see][Theorem~6.18]{Villani2009}. Then, due to Banach’s fixed-point theorem, we have a unique invariant measure and iterating the inequality and using that $1-\alpha h\leq\exp(-\alpha h)$ give \eqref{eq:biased_convergence}.
\end{proof}

Proposition~\ref{prop:contraction} gives convergence to the invariant measure of the scheme. To obtain convergence bounds with respect to $\pi$, we also require a bound on $W_2(\widetilde\pi_h,\pi)$, which is the focus of the subsequent section.

\section{Asymptotic bias bounds}

\begin{theorem}\label{thm:bias}
Suppose Assumptions~\ref{ass:potential} and~\ref{ass:sg} and \eqref{eq:stepsize} hold; then
\begin{equation}\label{eq:bias}
 \sqrt{\alpha}W_2(\widetilde\pi_h,\pi) \leq 12h\beta\sqrt{d} +\min\left(\frac{h\sigma_4^2}{2},\sqrt{2h}\sigma_2\right).
\end{equation}
\end{theorem}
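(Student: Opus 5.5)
We split the bias by the triangle inequality as $W_2(\widetilde\pi_h,\pi)\leq W_2(\widetilde\pi_h,\pi_h)+W_2(\pi_h,\pi)$. The second term is controlled by \eqref{eq:ULA_bias}, contributing $6h\beta\sqrt d$. For the first term we use a one-step perturbation argument combined with the contraction of Proposition~\ref{prop:contraction}. Let $P$ and $\widetilde P$ denote the ULA and SGLD kernels. Then
\[
W_2(\widetilde\pi_h,\pi_h)=W_2(\widetilde\pi_h\widetilde P,\pi_h P)\leq W_2(\widetilde\pi_h\widetilde P,\pi_h\widetilde P)+W_2(\pi_h\widetilde P,\pi_h P)\leq (1-\alpha h)^{1/2}W_2(\widetilde\pi_h,\pi_h)+W_2(\pi_h\widetilde P,\pi_h P).
\]
Since $1-(1-\alpha h)^{1/2}\geq \alpha h/2$, this gives $W_2(\widetilde\pi_h,\pi_h)\leq \frac{2}{\alpha h}W_2(\pi_h\widetilde P,\pi_hP)$. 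The remaining task is to bound the one-step discrepancy started from $\pi_h$. Because the local errors are evaluated under $\pi_h$ while $\sigma_2,\sigma_4$ are defined under $\pi$, we will move from $\pi_h$ to $\pi$. One option is to start instead from $\pi$: bound $W_2(\pi\widetilde P,\pi P)$ and absorb the change of initial law using the contraction of both kernels together with \eqref{eq:ULA_bias}. This accounts for the second $6h\beta\sqrt d$, giving $12h\beta\sqrt d$ in total. Concretely, we apply the same splitting to $W_2(\widetilde\pi_h,\pi)$ directly, with $\pi$ as the intermediate point, and use $W_2(\pi P,\pi)\le W_2(\pi P,\pi_h)+W_2(\pi_h,\pi)$.

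\emph{The $h^{1/2}$ bound.} Start from $x\sim\pi$ and couple synchronously (same Gaussian). The difference after one step is $h\eta(x,\omega)$, so
\[
W_2(\pi\widetilde P,\pi P)\leq h\sigma_2.
\]
This alone would give order $1$, which is too crude. Instead we use the independent, mean-zero structure across steps. Over $k$ steps of synchronous coupling between SGLD and ULA from the same stationary start, the deviation $D_k$ satisfies
\[
\E\norm{D_{k+1}}^2\leq(1-\alpha h)\E\norm{D_k}^2+h^2\E\norm{\eta}^2 .
\]
The cross term vanishes because $\eta(X_k,\omega_{k+1})$ is conditionally mean-zero. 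Moreover, $\E\norm{\eta(X_k,\cdot)}^2$ is controlled by $2\sigma_2^2$ plus $2C_G$ times the squared distance of $X_k$ to a $\pi$-distributed point, and the latter is lower order. Taking the stationary limit gives $\E\norm{D}^2\lesssim h\sigma_2^2/\alpha$. That is, $\sqrt\alpha\,W_2\leq\sqrt{2h}\sigma_2$ up to the ULA terms, and the constant $2$ is where the factor $2\sigma_2^2$ enters.

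\emph{The order $h$ bound.} This is the main obstacle. Following the Gaussian convolution idea of \cite{PaulinWhalley2026}, we do not couple the noise synchronously. From $x$, the ULA step has law $\mathcal N(m,2hI)$ with $m=x-h\nabla V(x)$. The SGLD step has law $\E_\omega\mathcal N(m-h\eta,2hI)$, a Gaussian mixture with mean-zero shifts. The key lemma to prove is
\[
W_2^2\bigl(\E_\omega\mathcal N(m-h\eta,2hI),\mathcal N(m,2hI)\bigr)\leq c\,\frac{\E\norm{h\eta}^4}{h}.
\]
We would try to prove this via a second-order (mean-zero) expansion: bound $W_2^2$ by $2h$ times a $\chi^2$ or KL divergence (Talagrand's inequality for the Gaussian), expand the density ratio to second order in $h\eta/\sqrt{h}$, and note that the first-order term cancels in expectation by unbiasedness. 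This yields a local error of $h^{3/2}\cdot\tfrac{1}{2}\norm{\eta}^2$ in $L^2$. Mixing over $x\sim\pi$ then gives the local error $\tfrac{h^{3/2}}{2\sqrt2}\sigma_4^2$, up to constants. Because it is a local error in a fresh-noise coupling, we accumulate these errors in a sum of squares rather than linearly, using a martingale or independence argument analogous to the $h^{1/2}$ case. This gives $\E\norm{D}^2\lesssim h^2\sigma_4^4/(4\alpha)$, i.e.\ $\sqrt\alpha W_2\leq h\sigma_4^2/2$. Adding the two ULA contributions of $6h\beta\sqrt d$ each and taking the better of the two stochastic-gradient bounds yields \eqref{eq:bias}.

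The delicate points are the sharp constant in the Gaussian mixture lemma and ensuring that the position dependence of $\eta$ along the trajectory (controlled through $C_G$ and the stepsize condition \eqref{eq:stepsize}) does not spoil the constants.
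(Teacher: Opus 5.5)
\paragraph{Verdict.} Your second and third paragraphs share the paper's skeleton, but the step that makes the fourth-moment bound work under the stated assumptions is missing, and your account of where the constant $12h\beta\sqrt d$ comes from is wrong.

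\paragraph{The missing step: controlling the local cost along the trajectory.} The shared skeleton is: start both chains at stationarity, couple the added noises so that their difference is conditionally centred (for the optimal coupling this holds because both marginals are centred), and accumulate local costs in mean square. This gives $\E\norm{X_k-Y_k}^2$ of order $\norm{f_h}^2/(\alpha h)$, where $f_h(x)=W_2(\rho_{h,x},\gamma_h)$ is the local noise cost.

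The recursion involves $f_h(X_k)$. But $\sigma_4$ controls $\E_\omega\norm{\eta(x,\omega)}^4$ only on average under $\pi$. Condition \eqref{eq:ass_sg} is a mean-square condition and says nothing about fourth moments elsewhere. So ``mixing over $x\sim\pi$'' evaluates the local error under the wrong law, and no argument through $C_G$ can move a fourth moment from $\pi$ to $\Law(X_k)$.

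The missing idea is that $f_h$ itself is Lipschitz. Couple $\rho_{h,x}$ and $\rho_{h,y}$ through the same $Z$ and $\omega$ and use the reverse triangle inequality: $|f_h(x)-f_h(y)|\leq hC_G^{1/2}\norm{x-y}$. Then:
\begin{itemize}
\item[(i)] $f_h(X_k)^2\leq 2f_h(Y_k)^2+2h^2C_G\norm{X_k-Y_k}^2$, and the second term is absorbed because \eqref{eq:stepsize} gives $(1-\alpha h)^2+2h^2C_G\leq 1-\alpha h$;
\item[(ii)] $\norm{f_h}_{L^2(\pi_h)}\leq\norm{f_h}_{L^2(\pi)}+hC_G^{1/2}W_2(\pi_h,\pi)$, so the fourth-moment inequality (or the synchronous bound) is only ever applied under $\pi$.
\end{itemize}
The last term in (ii) is not lower order. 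After amplification by $\sqrt{2/(\alpha h)}$ it becomes $\sqrt{2hC_G/\alpha}\,W_2(\pi_h,\pi)\leq W_2(\pi_h,\pi)$, and this is exactly the second $6h\beta\sqrt d$.

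Your moment-based version of this step in the $\sigma_2$ case has a related problem. With the factor-$2$ splittings you indicate, you need a second splitting through $Y_k$, which yields the coefficient $(1-\alpha h)^2+4h^2C_G$. At the largest $h$ allowed by \eqref{eq:stepsize} this equals $1-\alpha^2h^2$, so the stated constants and step-size range are lost.

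\paragraph{The first-paragraph mechanism does not work.} The one-step splitting with $\pi$ as intermediate point gives $W_2(\widetilde\pi_h,\pi)\leq\frac{2}{\alpha h}\{W_2(\pi\widetilde P,\pi P)+W_2(\pi P,\pi)\}$. With your bound $W_2(\pi P,\pi)\leq W_2(\pi P,\pi_h)+W_2(\pi_h,\pi)\leq 2W_2(\pi_h,\pi)$, the factor $2/(\alpha h)$ turns the $O(h)$ ULA bias into a term of order one, just as it does with $h\sigma_2$. Linear accumulation of local errors cannot produce any term of \eqref{eq:bias}. That paragraph should be dropped, not combined with the mean-square argument.

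\paragraph{The mixture lemma via $\chi^2$ fails.} For independent copies $\eta,\eta'$ of $\eta(x,\cdot)$,
\[
\chi^2(\rho_{h,x}\,\|\,\gamma_h)=\E\exp(h\langle\eta,\eta'\rangle/2)-1 .
\]
This is infinite without exponential moments. For the spike law of the last section it equals $p^2\{\cosh(h\sigma^2/(2p))-1\}$, which is exponentially large in $1/p$, whereas $h^3\E\eta_p^4=h^3\sigma^4/p$. A second-order KL expansion leaves a remainder you have not controlled. Talagrand's inequality also loses a factor $2$ in the leading constant even as $h\to0$. The paper instead invokes \cite[Theorem~G.3]{PaulinWhalley2026}, rescaled by $\sqrt{2h}$, which gives $f_h(x)^2\leq h^3\E_\omega\norm{\eta(x,\omega)}^4/8$ from fourth moments alone.
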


For fixed $\alpha$, $\beta$, $C_G$ and finite $\sigma_4$, the bound in \eqref{eq:bias} is of order $h$, but with only $\sigma_2<\infty$, it is of order $h^{1/2}$. 

\begin{proof}
Let $X_0=Y_0\sim\pi_h$, so that $Y_k\sim\pi_h$ for all $k$.
Our approach is to couple \eqref{eq:SGLD} and \eqref{eq:ULA} and bound their mean square difference uniformly in $k$. We now carefully construct the local coupling used to bound the difference between $X_k$ and $Y_k$.

For independent $Z\sim \mathcal{N}(0,I_d)$ and $\omega\sim\Gamma$, define
\[
 \rho_{h,x}=\Law\left( \sqrt{2h}Z-h\eta(x,\omega)\right),\qquad
 \gamma_h=\mathcal{N}(0,2hI_d),\qquad f_h(x)=W_2(\rho_{h,x},\gamma_h).
\]
Here $\rho_{h,x}$ and $\gamma_h$ are the distributions of the noise
added at each step of \eqref{eq:SGLD} and \eqref{eq:ULA}, respectively.
Using the same Gaussian vector in both noise terms gives
\[
 f_h(x)^2\leq h^2\E_\omega\norm{\eta(x,\omega)}^2.
\]
When $\sigma_4<\infty$, the Gaussian convolution inequality of
\cite[Theorem~G.3]{PaulinWhalley2026}, rescaled by $ \sqrt{2h}$, gives
\[
 f_h(x)^2\leq\frac{h^3}{8}\E_\omega\norm{\eta(x,\omega)}^4\quad\hbox{for $\pi$-almost every $x$}.
\]
Integrating under $\pi$ yields
\begin{equation}\label{eq:cost_pi}
 \norm{f_h}_{L^2(\pi)}\leq\min\left(\frac{h^{3/2}\sigma_4^2}{\sqrt{8}},h\sigma_2\right).
\end{equation}

Since the chains are initialised at $\pi_h$, we require a bound on
$\norm{f_h}_{L^2(\pi_h)}$. Using the same $Z$ and $\omega$ at $x$ and
$y$, we have
\[
 W_2^2(\rho_{h,x},\rho_{h,y})
 \leq h^2\E_\omega\norm{\eta(x,\omega)-\eta(y,\omega)}^2
 \leq h^2C_G\norm{x-y}^2.
\]
The reverse triangle inequality for $W_2$ therefore implies
\begin{equation}\label{eq:lipschitz}
 |f_h(x)-f_h(y)|\leq W_2(\rho_{h,x},\rho_{h,y})
                         \leq hC_G^{1/2}\norm{x-y}.
\end{equation}
Now take an optimal coupling $U\sim\pi_h$, $U'\sim\pi$ and applying the $L^2$ triangle inequality, \eqref{eq:lipschitz} and \eqref{eq:cost_pi} gives
\begin{align}
 \norm{f_h}_{L^2(\pi_h)} &\leq\norm{f_h(U')}_{L^2} +\norm{f_h(U)-f_h(U')}_{L^2}\\
&\leq\norm{f_h}_{L^2(\pi)}+hC_G^{1/2}W_2(\pi_h,\pi)\\
&\leq\min\left(\frac{h^{3/2}\sigma_4^2}{\sqrt{8}},h\sigma_2\right)+hC_G^{1/2}W_2(\pi_h,\pi).
 \label{eq:L2_pi_hat}
\end{align}

We now construct the coupling at $X_k$. Let $\mathcal F_k$ contain all randomness up to iteration $k$; then, conditional on $\mathcal F_k$, draw independent $\omega_{k+1}\sim\Gamma$ and $\widetilde Z_{k+1}\sim \mathcal{N}(0,I_d)$, and then choose $Z_{k+1}$ so that
$
 \left(\sqrt{2h}\widetilde Z_{k+1}-h\eta(X_k,\omega_{k+1}),\sqrt{2h}Z_{k+1}\right)
$
is an optimal coupling of $\rho_{h,X_k}$ and $\gamma_h$, conditionally on $\mathcal F_k$. The optimal coupling can be chosen measurably in $x$; see \citep[Corollary~5.22]{Villani2009}.

Both added noise terms have conditional expectation zero, and hence
\[
 \E\big[  \sqrt{2h}(\widetilde Z_{k+1}-Z_{k+1})-h\eta(X_k,\omega_{k+1})\mid\mathcal F_k\big]=0.
\]
Consequently, using this identity, considering the constructed coupling at each iteration and expanding the squared distance between $X_{k+1}$ and $Y_{k+1}$, we obtain
\begin{align}
 \E\left[\norm{X_{k+1}-Y_{k+1}}^2\mid\mathcal F_k\right]
 &=\norm{X_k-Y_k-h\left(\nabla V(X_k)-\nabla V(Y_k)\right)}^2 +f_h(X_k)^2\\
 &\leq(1-\alpha h)^2\norm{X_k-Y_k}^2+f_h(X_k)^2.
 \label{eq:conditional}
\end{align}

Now, by \eqref{eq:lipschitz} we have
\[
 f_h(X_k)^2\leq2f_h(Y_k)^2+2h^2C_G\norm{X_k-Y_k}^2,
\]
and substituting in \eqref{eq:conditional} and taking an unconditional expectation, using $Y_k\sim\pi_h$, gives
\begin{align*}
 \E\norm{X_{k+1}-Y_{k+1}}^2
 &\leq\{(1-\alpha h)^2+2h^2C_G\}\E\norm{X_k-Y_k}^2 +2\norm{f_h}_{L^2(\pi_h)}^2\\
 &\leq(1-\alpha h)\E\norm{X_k-Y_k}^2+2\norm{f_h}_{L^2(\pi_h)}^2,
\end{align*}
where the last inequality uses \eqref{eq:stepsize}. Since $X_0=Y_0$, summing the geometric series yields, for $k\geq1$,
\begin{equation}\label{eq:uniform}
 W_2^2\left(\Law(X_k),\pi_h\right)\leq\E\norm{X_k-Y_k}^2 \leq\frac{2\norm{f_h}_{L^2(\pi_h)}^2}{\alpha h}\{1-(1-\alpha h)^k\}.
\end{equation}
Proposition~\ref{prop:contraction} implies convergence of $\Law(X_k)$ to $\widetilde\pi_h$ in $W_2$. Taking square roots, passing to the limit in \eqref{eq:uniform}, and applying \eqref{eq:L2_pi_hat} proves
\[
 W_2(\widetilde\pi_h,\pi_h) \leq\min\left(\frac{h\sigma_4^2}{2\sqrt{\alpha}},\sqrt{\frac{2h}{\alpha}}\sigma_2\right)+\sqrt{\frac{2hC_G}{\alpha}}W_2(\pi_h,\pi).
\]
Finally, \eqref{eq:stepsize} gives $(2hC_G/\alpha)^{1/2}\leq1$, so \eqref{eq:ULA_bias} and the triangle inequality give \eqref{eq:bias} as required.
\end{proof}

\section{Iteration complexity}

Combining Theorem~\ref{thm:bias} and Proposition~\ref{prop:contraction} with the triangle inequality, we have for $X_0\sim\mu\in\mathcal P_2(\R^d)$,
\begin{align}
\sqrt{\alpha}W_2\left(\Law(X_k),\pi\right)&\leq \sqrt{\alpha}e^{-\alpha hk/2}W_2(\mu,\widetilde\pi_h)+12h\beta\sqrt{d}+\min\left\{\frac{h\sigma_4^2}{2},\sqrt{2h}\sigma_2\right\}.
 \label{eq:finite_time}
\end{align}
The first term decreases geometrically with $k$, and the remaining terms are of order $h$ when $\sigma_4<\infty$, and of order $h^{1/2}$ under only a second moment assumption. 

As is typical in convergence bounds in log-concave sampling \citep{Chewi26Book}, we consider accuracy in $\sqrt{\alpha}W_2$ as a scale-invariant quantity. Then using \eqref{eq:finite_time}, and a routine computation we can establish the following corollary to Theorem \ref{thm:bias}.

\begin{corollary}\label{cor:complexity}
Suppose Assumptions~\ref{ass:potential} and~\ref{ass:sg} hold, and let
$X_0\sim\mu\in\mathcal P_2(\R^d)$. For $\varepsilon>0$, choose
\begin{equation}\label{eq:h_epsilon}
 h=\min\left(\frac1\beta,\frac{\alpha}{\alpha^2+2C_G}, \max\left(\frac{\varepsilon}{24\beta\sqrt{d}+\sigma_4^2},\frac{\varepsilon}{48\beta\sqrt{d}+32\sigma_2^2/\varepsilon}\right)\right).
\end{equation}
Then $\sqrt{\alpha}W_2\left(\Law(X_k),\pi\right)\leq\varepsilon$ whenever
\begin{equation}\label{eq:k_epsilon}
 k\geq\left\lceil\frac{2}{\alpha h}
       \log\left(2+\frac{2\sqrt{\alpha}W_2(\mu,\pi)}{\varepsilon}\right)
       \right\rceil.
\end{equation}
In particular, for $0<\varepsilon\leq1$, it suffices to take
\begin{equation}\label{eq:complexity}
 k=\mathcal{O}\left[\left(\frac{C_G}{\alpha^2}+\frac{\kappa\sqrt{d}}{\varepsilon}+\min\left(\frac{\sigma_4^2}{\alpha\varepsilon},\frac{\sigma_2^2}{\alpha\varepsilon^2}\right)\right)\log\left(2+\frac{\sqrt{\alpha}W_2(\mu,\pi)}{\varepsilon}\right)\right].
\end{equation}
\end{corollary}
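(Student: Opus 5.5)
The argument starts from \eqref{eq:finite_time}. The plan is to show that the choice \eqref{eq:h_epsilon} makes the bias terms at most $\varepsilon/2$, and that \eqref{eq:k_epsilon} makes the transient term at most $\varepsilon/2$. First note that $h$ in \eqref{eq:h_epsilon} is at most $\min\{1/\beta,\alpha/(\alpha^2+2C_G)\}$, so \eqref{eq:stepsize} holds and Theorem~\ref{thm:bias} and Proposition~\ref{prop:contraction} apply.

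\textbf{Bias terms.} The inner maximum in \eqref{eq:h_epsilon} is attained by one of its two entries, and $h$ is at most that entry, so it suffices to treat two cases and to use monotonicity of the bias bound in $h$.
\begin{itemize}
\item If $h\leq\varepsilon/(24\beta\sqrt d+\sigma_4^2)$, then $12h\beta\sqrt d\leq\varepsilon/2\cdot\frac{24\beta\sqrt d}{24\beta\sqrt d+\sigma_4^2}$ and $h\sigma_4^2/2\leq \varepsilon/2\cdot\frac{\sigma_4^2}{24\beta\sqrt d+\sigma_4^2}$. These sum to at most $\varepsilon/2$, so we use the first entry of the $\min$ in \eqref{eq:bias}. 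The case $\sigma_4=\infty$ makes this entry zero and is excluded automatically.
\item If $h\leq\varepsilon/(48\beta\sqrt d+32\sigma_2^2/\varepsilon)$, then $12h\beta\sqrt d\leq\varepsilon/4$ and $h\leq\varepsilon^2/(32\sigma_2^2)$. Hence $\sqrt{2h}\sigma_2\leq\sqrt{\varepsilon^2/16}=\varepsilon/4$, and again the total is at most $\varepsilon/2$.
\end{itemize}
In either case Theorem~\ref{thm:bias} gives $\sqrt\alpha W_2(\widetilde\pi_h,\pi)\leq\varepsilon/2$.

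\textbf{Transient term.} By the triangle inequality, $W_2(\mu,\widetilde\pi_h)\leq W_2(\mu,\pi)+W_2(\pi,\widetilde\pi_h)$, so
\[
\sqrt\alpha W_2(\mu,\widetilde\pi_h)\leq\sqrt\alpha W_2(\mu,\pi)+\varepsilon/2 .
\]
It therefore suffices that
\[
e^{-\alpha hk/2}\bigl(\sqrt\alpha W_2(\mu,\pi)+\varepsilon/2\bigr)\leq\varepsilon/2,
\]
which is $k\geq\frac{2}{\alpha h}\log\left(1+\frac{2\sqrt\alpha W_2(\mu,\pi)}{\varepsilon}\right)$. This is implied by \eqref{eq:k_epsilon}, since $\log(2+\cdot)\geq\log(1+\cdot)$. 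Combining the two halves in \eqref{eq:finite_time} gives the claimed $\varepsilon$ accuracy.

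\textbf{Complexity.} Plug $1/h$ into \eqref{eq:k_epsilon} and use
\[
1/h=\max\Bigl(\beta,\ \tfrac{\alpha^2+2C_G}{\alpha},\ \min\bigl(\tfrac{24\beta\sqrt d+\sigma_4^2}{\varepsilon},\ \tfrac{48\beta\sqrt d}{\varepsilon}+\tfrac{32\sigma_2^2}{\varepsilon^2}\bigr)\Bigr).
\]
Divide by $\alpha$ and bound the maximum by a sum:
\begin{itemize}
\item $\beta/\alpha=\kappa\leq\kappa\sqrt d/\varepsilon$ for $\varepsilon\leq1$;
\item $(\alpha^2+2C_G)/\alpha^2=1+2C_G/\alpha^2$, where the constant $1$ is absorbed by $\kappa\geq1$;
\item the inner minimum is at most $\mathcal O\bigl(\kappa\sqrt d/\varepsilon+\min(\sigma_4^2/(\alpha\varepsilon),\sigma_2^2/(\alpha\varepsilon^2))\bigr)$, using $\min(a+b,a'+c)\leq\max(a,a')+\min(b,c)$ with $a,a'$ being the $\beta\sqrt d$ terms.
\end{itemize}
The ceiling adds at most one, which is absorbed since the logarithm is at least $\log 2$. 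This yields \eqref{eq:complexity}.

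The main obstacle is bookkeeping rather than anything conceptual. The key point is to check that the maximum in \eqref{eq:h_epsilon} is legitimate: one only needs $h$ to be small relative to whichever branch is chosen, and the minimum in \eqref{eq:bias} lets us pick the matching branch.
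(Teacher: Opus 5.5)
Your proposal is correct and is essentially the routine computation from \eqref{eq:finite_time} that the paper alludes to without writing it out. You split $\varepsilon$ into a bias half, handled by matching the branch attaining the maximum in \eqref{eq:h_epsilon} with the corresponding branch of the minimum in \eqref{eq:bias}, and a transient half, handled via $W_2(\mu,\widetilde\pi_h)\leq W_2(\mu,\pi)+W_2(\pi,\widetilde\pi_h)$. The one step you leave implicit is replacing $\log(2+2x)$ by $\log(2+x)$ in \eqref{eq:complexity}, where $x=\sqrt{\alpha}W_2(\mu,\pi)/\varepsilon$. This costs at most a factor $2$, since $\log(2+2x)\leq\log 2+\log(2+x)\leq 2\log(2+x)$.
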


\section{Dependence on the noise distribution}

Motivated by the spike examples of \cite{PaulinWhalley2026}, we examine whether higher moments affect the actual bias. Consider the Gaussian target $\pi=\mathcal{N}(0,1)$ and the stochastic gradient error $\eta_p$ defined simply by
\[
\mathbb{P}(\eta_p=0)=1-p,\qquad \mathbb{P}(\eta_p=\sigma/\sqrt p) =\mathbb{P}(\eta_p=-\sigma/\sqrt p)=p/2,
\]
where $\sigma>0$ and $0<p\leq 1$; then $\E\eta_p=0$, $\E\eta_p^2=\sigma^2$ and $\E\eta_p^4=\sigma^4/p$. Thus decreasing $p$ makes the noise increasingly concentrated in rare, large spikes, while preserving its variance.

For $0<h\leq1$, the stationary position admits the independent-sum representation
\[
X_{h,p}\overset{d}=G_h+S_{h,p},\qquad S_{h,p}=-h\sum_{j=0}^{\infty}(1-h)^j\eta_{p,j},
\]
where the $\eta_{p,j}$ are independent copies of $\eta_p$, independent of
\[
G_h\sim\mathcal{N}\left(0,\frac{2}{2-h}\right)=\pi_h.
\]
We have that $\E S_{h,p}^2=\frac{h\sigma^2}{2-h}$, and hence the stationary variance does not depend on $p$.

\begin{lemma}\label{lem:spike_example}
Let $\widetilde\pi_{h,p}$ be the invariant measure of \eqref{eq:SGLD} in one dimension with stepsize $0<h\leq 1$, a standard Gaussian target and stochastic gradient error governed by $\eta_p$, and $\pi_h$ be the invariant measure of \eqref{eq:ULA} with the same target; then we have that 
\[
\sup_{0<p\leq 1}W_2(\widetilde\pi_{h,p},\pi_h) =\sigma\sqrt{\frac{h}{2-h}}.
\] 
\end{lemma}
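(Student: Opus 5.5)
The plan is to prove the upper bound $W_2(\widetilde\pi_{h,p},\pi_h)\le\sigma\sqrt{h/(2-h)}$ for every $p$, and then a matching lower bound as $p\to0$.

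\emph{Upper bound.} We use the independent-sum representation $X_{h,p}\overset{d}=G_h+S_{h,p}$ with $G_h\sim\pi_h$ independent of $S_{h,p}$. The pair $(G_h+S_{h,p},G_h)$ is a coupling of $\widetilde\pi_{h,p}$ and $\pi_h$. Therefore
\[
W_2^2(\widetilde\pi_{h,p},\pi_h)\le\E S_{h,p}^2=h^2\sigma^2\sum_{j\ge0}(1-h)^{2j}=\frac{h\sigma^2}{2-h},
\]
uniformly in $p$. To justify the representation, unroll the recursion $X_{k+1}=(1-h)X_k-h\eta_{p,k+1}+\sqrt{2h}\widetilde Z_{k+1}$. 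This splits the stationary limit into a Gaussian series, which has law $\mathcal N(0,2/(2-h))=\pi_h$, plus the independent series $S_{h,p}$. Both series converge in $L^2$ because $|1-h|<1$ for $0<h\le1$.

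\emph{Lower bound.} We show $\liminf_{p\to0}W_2(\widetilde\pi_{h,p},\pi_h)\ge\sigma\sqrt{h/(2-h)}$. For fixed $J$, the event that $\eta_{p,j}=0$ for all $j\le J$ has probability $(1-p)^{J+1}\to1$. On the complement, some early term is a spike of size $\sigma/\sqrt p$. The tail $\sum_{j>J}$ has variance at most $h\sigma^2(1-h)^{2J+2}/(2-h)$, which is small for large $J$. Hence $S_{h,p}\to0$ in probability while $\E S_{h,p}^2$ is fixed, so the mass of $S_{h,p}$ escapes to infinity on rare events.

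We then use the decomposition $\E X_{h,p}^2=\E G_h^2+\E S_{h,p}^2$. For any coupling $(X,G)$ of $\widetilde\pi_{h,p}$ and $\pi_h$,
\[
\E(X-G)^2=\E X^2+\E G^2-2\E XG.
\]
It therefore suffices to show $\sup_{\text{couplings}}\E XG\to\E G_h^2$, which gives $W_2^2\to\E S_{h,p}^2$. Take $M$ large and split $X=X\mathbf 1_{|X|\le M}+X\mathbf 1_{|X|>M}$. By Cauchy--Schwarz,
\[
\E XG\le\bigl(\E X^2\mathbf 1_{|X|\le M}\bigr)^{1/2}(\E G^2)^{1/2}+\bigl(\E X^2\mathbf 1_{|X|>M}\bigr)^{1/2}\bigl(\E G^2\mathbf 1_{A}\bigr)^{1/2},
\]
where $A=\{|X|>M\}$ has probability $\to0$ as $p\to0$ for fixed $M$. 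Uniform integrability of $G^2$ makes the second factor vanish. The truncated second moment $\E X^2\mathbf 1_{|X|\le M}$ converges to $\E (G_h)^2\mathbf 1_{|G_h|\le M}\le\E G_h^2$, since $X_{h,p}\to G_h$ in distribution. Consequently $\limsup_{p\to0}\sup\E XG\le\E G_h^2$. Combined with $\E X^2=\E G_h^2+h\sigma^2/(2-h)$, this gives the lower bound, and the supremum equals $\sigma\sqrt{h/(2-h)}$.

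The main obstacle is the lower bound. This Cauchy--Schwarz and truncation argument is what I expect to carry it. The key point is that weak convergence $X_{h,p}\Rightarrow G_h$, together with persistent excess second moment, forces $W_2^2$ to converge to exactly that excess.
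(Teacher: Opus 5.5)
Your proposal is correct, and your upper bound (coupling $G_h+S_{h,p}$ with $G_h$) is exactly the paper's. You also justify the series representation, which the paper only states.

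For the lower bound you take a genuinely different route. The paper works with the explicit one-dimensional optimal coupling. It writes $W_2^2=\E X_{h,p}^2+\E G_h^2-2\int_0^1F_p^{-1}F^{-1}\,du$ and truncates on $\{|F^{-1}|\le M\}$, that is, on the Gaussian side. The truncated part is bounded quantitatively by $M\,W_1\le M\,\E|S_{h,p}|\le M\sigma\sqrt p$, which uses that the quantile coupling is also $W_1$-optimal. The remainder is handled by Cauchy--Schwarz together with the already-proved upper bound on $W_2$.

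You instead bound $\E XG$ uniformly over all couplings, truncate on $|X|$, and use only three facts: $X_{h,p}\Rightarrow G_h$, $\sup_p\E X_{h,p}^2<\infty$, and uniform integrability of $G_h^2$. This gives no rate in $p$, but it never uses quantile functions. So it actually proves a dimension-free fact: if $\mu_n\Rightarrow\nu$ with bounded second moments, then $\liminf_n W_2^2(\mu_n,\nu)\ge\liminf_n\int\norm{x}^2d\mu_n-\int\norm{x}^2d\nu$. That would give the tensorised $d$-dimensional statement directly. The paper's one-line estimate $\E|S_{h,p}|\le h\sum_j(1-h)^j\E|\eta_p|=\sigma\sqrt p$ could also replace your event-based proof that $S_{h,p}\to0$ in probability.

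One claim needs correcting. For fixed $M$, $P(|X_{h,p}|>M)$ does not tend to $0$ as $p\to0$; by weak convergence it tends to $P(|G_h|>M)>0$. What is true is $\limsup_{p\to0}P(|X_{h,p}|>M)\le P(|G_h|\ge M)$. So you must let $p\to0$ first and then $M\to\infty$, as the paper also does. In that step, use $\sup_{P(A)\le\delta}\E G^2\mathbf 1_A\to0$ as $\delta\to0$ together with the uniform bound on $\E X^2$. With that order of limits, which your ``take $M$ large'' already points to, the argument is complete.
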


Lemma \ref{lem:spike_example} (whose proof we defer to the appendix) shows that the bias remains bounded at fixed $h$, but no first-order bound can hold uniformly over this fixed-variance family. This establishes the sharpness of the square-root stepsize order uniformly over noise laws with fixed variance. For fixed $p=10^{-8}$, Figure~\ref{fig:spiked_example} illustrates the crossover from approximately square-root to linear dependence on the stepsize as $h$ decreases.

\begin{figure}
\centering
\includegraphics[width=18pc]{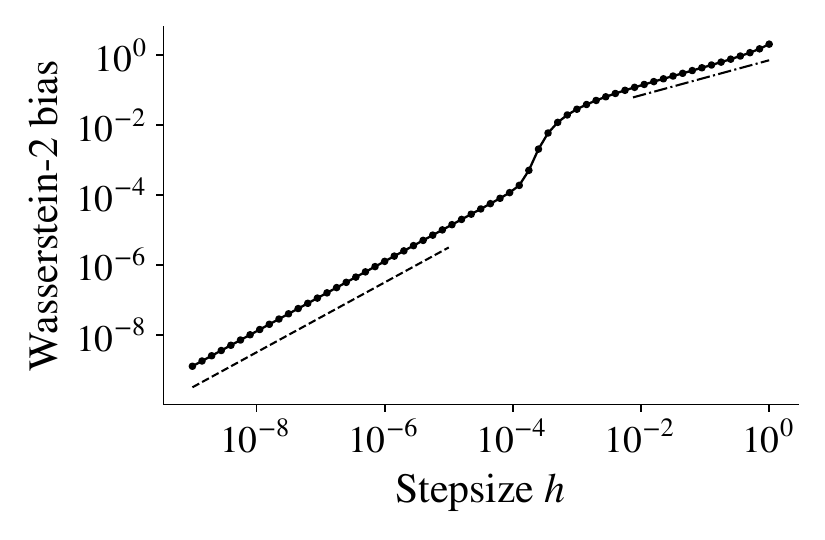}
\caption{Stationary bias $W_2(\widetilde\pi_{h,p},\pi)$ for $\pi=N(0,1)$, $p=10^{-8}$ and $\sigma=2$, computed by numerical inversion of the stationary characteristic function (see \cite{Davies1973}). The dashed and dot-dashed lines are reference curves proportional to $h$ and $h^{1/2}$, respectively.}
\label{fig:spiked_example}
\end{figure}

We can tensorise the same measure to provide a higher-dimensional example and illustrate adverse dimension dependence. We now consider \eqref{eq:SGLD} with target $\pi_d=\mathcal{N}(0,I_d)$, and independent stochastic gradient coordinate errors with the same spike law. Their covariance is $\sigma^2 I_d$, and tensorization yields
\[
\sup_{0<p\leq1} W_2(\widetilde\pi_{h,p}^{\otimes d},\pi_h^{\otimes d}) =\sigma\sqrt{\frac{hd}{2-h}}.
\]
At $h=d^{-1/2}$, this grows as $d^{1/4}$, whereas Gaussian gradient noise with the same covariance gives bias of order $h\sqrt d$, which remains bounded. Since the unadjusted Langevin bias also remains bounded at these stepsizes, the worst-case bias relative to $\pi_d$ has the same $d^{1/4}$ growth. These examples demonstrate the need for additional tail control in providing uniform first-order bounds.

\section*{Acknowledgement}
DP and PAW acknowledge the support of the Nanyang Technological University–University of Warwick Research Seed Fund.

\printbibliography

\appendix
\section{Proof of Lemma \ref{lem:spike_example}}

Fix $0<h\leq1$, and let $F_p$ and $F$ be the distribution functions of $\widetilde\pi_{h,p}$ and $\pi_h$, respectively.
Their quantile functions are denoted by $F_p^{-1}$ and $F^{-1}$ and we wish to compute 
\begin{align*}
W_2^2(\widetilde\pi_{h,p},\pi_h)
&=\int_0^1\left(F_p^{-1}(u)-F^{-1}(u)\right)^2du,
\end{align*}
and we have that
\[
\int_0^1 F_p^{-1}(u)^2du= \mathbb{E}X^{2}_{h,p} = \frac{2+h\sigma^2}{2-h},
\qquad
\int_0^1 F^{-1}(u)^2du= \mathbb{E}G^{2}_{h}=\frac{2}{2-h}.
\]
What remains is to control $\int_0^1F_p^{-1}(u)F^{-1}(u)du$, where 
\begin{align*}
    \left|\int_0^1F_p^{-1}(u)F^{-1}(u)du\right| \leq \left|\int_0^1\left(F_p^{-1}(u)-F^{-1}(u)\right)F^{-1}(u)du\right| + \int_0^1 F^{-1}(u)^{2}du.
\end{align*}
Considering the first term we have that for $M > 0$
\begin{align*}
\left|\int_{\left\{|F^{-1}|\leq M\right\}}\left(F_p^{-1}(u)-F^{-1}(u)\right)F^{-1}(u)du\right| &\leq M \int^{1}_{0}\left|F_p^{-1}(u)-F^{-1}(u)\right|du\\
&\leq M\mathbb{E}|S_{h,p}| \leq M\sigma\sqrt{p},
\end{align*}
due to optimality of the quantile coupling for the Wasserstein  distance of order one. Then we also have 
\begin{align*}
\left|\int_{\left\{|F^{-1}|> M\right\}}\left(F_p^{-1}(u)-F^{-1}(u)\right)F^{-1}(u)du\right| &\leq W_2(\widetilde\pi_{h,p},\pi_h)\left[\int_{\left\{|F^{-1}|> M\right\}}F^{-1}(u)^{2}du\right]^{1/2} \\
&\leq \sigma\sqrt{\frac{h}{2-h}}\left[\int_{\left\{|F^{-1}|> M\right\}}F^{-1}(u)^{2}du\right]^{1/2}\longrightarrow 0,
\end{align*}
as $M\to \infty$ by the dominated convergence theorem.

Now letting first $p\downarrow0$ and then $M\to\infty$ shows that
\[
\int_0^1F_p^{-1}(u)F^{-1}(u)du
\longrightarrow\frac{2}{2-h}.
\]
Hence in the same limit the one-dimensional representation of the Wasserstein distance yields
\begin{align*}
W_2^2(\widetilde\pi_{h,p},\pi_h)
&=\int_0^1\left(F_p^{-1}(u)-F^{-1}(u)\right)^2du\\
&=\frac{4+h\sigma^2}{2-h}
-2\int_0^1F_p^{-1}(u)F^{-1}(u)du \longrightarrow\frac{h\sigma^2}{2-h}.
\end{align*}
Coupling $G_h+S_{h,p}$ with $G_h$ also gives the matching upper bound, and therefore
\[
\sup_{0<p\leq 1}W^2_2(\widetilde\pi_{h,p},\pi_h) =\frac{h\sigma^2}{2-h},
\]
as required.

\end{document}